\newcommand{\cn}{{\mathcal N}}
\newcommand{\cd}{{\mathcal D}}
\newcommand{\ce}{{\mathcal E}}
\newcommand{\cl}{{\mathcal L}}
\def\row#1#2{{#1}_1,\ldots ,{#1}_{#2}}
\def\row#1#2{{#1}_1,\ldots ,{#1}_{#2}}
\journalname{Order}
\title{Generalisation of the Danilov-Karzanov-Koshevoy Construction for Peak-Pit Condorcet Domains}
\titlerunning{Peak-Pit Condorcet Domains}
\author{Arkadii Slinko}
\institute{
           Arkadii Slinko \at
           Department of Mathematics,
           University of Auckland, 
           Auckland, New Zealand
           \email{a.slinko@auckland.ac.nz}
}
\begin{document}

\maketitle

\begin{abstract}
Danilov, Karzanov and Koshevoy (2012) geometrically introduced an interesting operation of composition on tiling Condorcet domains and using it they disproved a long-standing problem of Fishburn about the maximal size of connected Condorcet domains. We give an algebraic definition of this operation and investigate its properties. We give a precise formula for the cardinality of  composition of two Condorcet domains and improve the Danilov, Karzanov and Koshevoy result showing that Fishburn's alternating scheme does not always define a largest peak-pit Condorcet domain.
  \keywords{Condorcet domain \and Peak-pit domain \and Never conditions \and Danilov-Karzanov-Koshevoy construction}
\end{abstract}

\section{Introduction}

The famous Condorcet Paradox shows that if voters' preferences are unrestricted, the majority voting can lead to intransitive collective preference in which case the Condorcet Majority Rule \citep{Condorcet1785}, despite all its numerous advantages, is unable to determine the best alternative, i.e., it is not always decisive. 
Domain restrictions is, therefore, an important topic in economics and computer science alike \citep{elkind2018restricted}.  
In particular, for artificial societies of autonomous software agents there is no problem of individual freedom and, hence, for the sake of having transitive collective decisions the designers can restrict choices of those artificial agents in order to make the majority rule work every time. 

Condorcet domains represent a solution to this problem, they are sets of linear orders with the property that, whenever the preferences of all voters belong to this set, the majority relation of any profile with an odd number of voters is transitive. Maximal Condorcet domains historically have attracted a special attention since they represent a compromise which  allows a society to always have transitive collective preferences and, under this constraint,  provide voters with as much individual freedom as possible. The question: ``How large a Condorcet domain can be?'' has attracted even more attention  (see the survey of \citet{mon:survey} for a fascinating account of historical developments). \citet{kim1992overview} identified this problem as a major unsolved problem in the mathematical social sciences. \citet{PF:1996} introduced the function
\[
f(n)=\max \{|\cd| : \text{$\cd$ is a Condorcet domain on the set of $n$ alternatives.}\}
\]
and put this problem in the mathematical perspective asking for maximal values of this function. 

\citet{Abello91} and \citet{PF:1996,PF:2002} managed to construct some ``large'' Condorcet domains based on different ideas. Fishburn, in particular, taking a clue from Monjardet example (sent to him in private communication), came up with the so-called alternating scheme domains (that will be defined later in the text), later called Fishburn's domains \citep{DKK:2012}. This scheme produced Condorcet domains with some nice properties, which, in particular, are connected and have maximal width (see the definitions of these concepts later in this paper). 
\citet{PF:1996} conjectured (Conjecture 2) that among Condorcet domains that do not satisfy the so-called never-middle condition (these in \cite{DKK:2012} were later called peak-pit domains), the alternating scheme provides domains of maximum cardinality. \citet{GR:2008} formulated another similar hypothesis (Conjecture 1) which later appeared to be equivalent to Fishburn's one \citep{DKK:2012}. \citet{monjardet2006condorcet} introduced the function
\[
g(n)=\max \{|\cd| : \text{$\cd$ is a peak-pit domain on the set of $n$ alternatives}\}
\]
in terms of which Fishburn's hypothesis becomes $g(n)=|F_n|$, where $F_n$ is the $n$th Fishburn domain. 
\citet{mon:survey} also emphasised Fishburn's hypothesis.   

It is known that $g(n)=f(n)$ for $n\le 7$  \citep{PF:1996, GR:2008} and it is believed that $g(16)<f(16)$ \citep{mon:survey}. This is because \cite{PF:1996} showed that $f(16)>|F_{16}|$. Thus, if Fishburn's hypothesis were true we would get $f(n)>g(n)$ for large $n$. However, this hypothesis is not true.

\citet{DKK:2012} introduced the class of tiling domains which are peak-pit domains of maximal width and defined an operation on tiling domains that allowed them to show that $g(42)>|F_{42}|$. This operation was somewhat informally defined which made investigation of it and application of it in other situations difficult. In the present article we give an algebraic definition and a generalisation of the Danilov-Karzanov-Koshevoy construction and investigate its properties. 
In our interpretation it involves two peak-pit Condorcet domains $\cd_1$ and $\cd_2$ on sets of $n$ and $m$ alternatives, respectively, and two linear orders $u\in\cd_1$ and $v\in \cd_2$; the result is denoted as $(\cd_1\otimes \cd_2)(u,v)$. It is again a peak-pit Condorcet domain on $n+m$ alternatives whose exact cardinality we can calculate. Using this formula we can slightly refine the argument from \cite{DKK:2012} to show that $g(40)>|F_{40}|$.


\section{Preliminaries}
\label{sec:prel}

Let $A$ be a finite set and $\mathcal{L}(A)$ be the set of all (strict) linear orders on $A$. Any subset $\cd \subseteq \mathcal{L}(A)$ will be called a {\em domain}. Any sequence $P=(\row vn)$ of linear orders from $\cd$ will be called  a {\em profile} over $\cd$\footnote{A profile, unlike the domain, can have several identical linear orders.}. It usually represents a collective set of opinions of a society about merits of alternatives from $A$. A linear order $a_1>a_2>\cdots>a_n$ on $A$, will be denoted by a string $a_1a_2\ldots a_n$.  Let us also introduce notation for reversing orders: if $x=a_1a_2\ldots a_n$, then $\bar{x}=a_na_{n-1}\ldots a_1$. If linear order $v_i$ ranks $a$ higher than $b$, we denote this as $a\succ_ib$.

\begin{definition}
The {\em majority relation} $\succeq_P$ of a profile $P$ is defined as
\[
a\succeq_P b \Longleftrightarrow |\{i\mid a\succ_i b\}| \ge  |\{i\mid b\succ_i a\}|.
\]
Verbally,  $a\succeq_P b$ means that at lest as many voters from a society with profile $P$ prefer $a$ to $b$ as voters who prefer $b$ to $a$. 
For an odd number of linear orders in the profile $P$ this relation is a tournament, i.e., complete and asymmetric binary relation. In this case we denote it $\succ_P$.
\end{definition}

Now we can define the main object of this investigation.

\begin{definition}
A domain $\cd \subseteq \cl(A)$ over a set of alternatives $A$ is a {\em Condorcet domain} if the majority relation $\succ_P$ of any profile $P$ over $\cd $ with odd number of voters is transitive. A Condorcet domain $\cd$ is {\em maximal} if for any Condorcet domain ${\cd}' \subseteq \cl(A)$ the inclusion $\cd \subseteq {\cd}'$ implies $\cd={\cd}'$. 
\end{definition}

There is a number of alternative definitions of Condorcet domains, see e.g., \citet{mon:survey,puppe2019condorcet}.

Up to an isomorphism, there is only one maximal Condorcet domain on the set $\{a,b\}$, namely $CD_2=\{ab, ba\}$ and there are only three maximal Condorcet domains on the set of alternatives $\{a,b,c\}$, namely, 
\begin{align*}
CD_{3,t}&= \{abc, acb, cab, cba\}, \quad    CD_{3,m}=\{abc, bca, acb, cba\}, \\    
&\hspace*{15mm}CD_{3,b}= \{abc, bac, bca, cba\}.
\end{align*}
The first domain contains all the linear orders on $a,b,c$ where $b$ is never ranked first, second contains all the linear orders on $a,b,c$ where $a$ is never ranked second and the third contains all the linear orders on $a,b,c$ where $b$ is never ranked last. Following Monjardet, we denote these conditions as $bN_{\{a,b,c\}}1$, $aN_{\{a,b,c\}}2$ and $bN_{\{a,b,c\}}3$,  respectively.  We note that these are the only conditions of type $xN_{\{a,b,c\}}i$ with $x\in \{a,b,c\}$ and $i\in \{1,2,3\}$ that these domains satisfy.  

A domain that for any triple $a,b,c\in A$ satisfies a condition $xN_{\{a,b,c\}}1$ with $x\in \{a,b,c\}$ is called {\em never-top} domain, a domain that for any triple $a,b,c\in A$ satisfies a condition $xN_{\{a,b,c\}}2$ with $x\in \{a,b,c\}$ is called {\em never-middle} domain, and a domain that for any triple $a,b,c\in A$ satisfies a condition $xN_{\{a,b,c\}}3$ with $x\in \{a,b,c\}$ is called {\em never-bottom} domain. 

\begin{definition}[\cite{DKK:2012}]
A domain that for any triple satisfies either never-top or never-bottom condition is called a {\em peak-pit domain}. Both never-top and never-bottom conditions will be called {\em peak-pit conditions}. 
\end{definition}

We note that \citet{DKK:2012}, who consider linear orders over $A=\{1,2,\ldots, n\}$, restrict in their investigation the class of peak-pit domains to domains that contain two completely reversed orders (up to an isomorphism they can be taken as $12\ldots n$ and $\overline{12\ldots n}=n\,n-1\ldots 1$) and prove that under this restriction all of them can be embedded into tiling domains (Theorem~2 of \cite{DKK:2012}).  We also note that never-bottom domains are also known as Arrow's single-peaked domains and maximal domains among them have all cardinality $2^{n-1}$ \citep{slinko2019}.\par\medskip

Given a set of alternatives $A$, we say that 
\begin{equation}
\label{complete_set}
\cn=\{ xN_{\{a,b,c\}}i \mid \{a,b,c\}\subseteq A,\   x\in \{a,b,c\}\ \text{and $i\in \{1,2,3\}$} \}
\end{equation}
is a {\em complete set of never conditions} if it contains at least one never condition for every triple $a,b,c$ of distinct elements of~$A$. If the set of linear orders that satisfy $\cn$ is non-empty, we say that $\cn$ is consistent.

\begin{proposition}
\label{classic}
A domain of linear orders $\cd\subseteq \mathcal{L}(A)$ is a Condorcet domain if and only if it is non-empty and satisfies a complete set of never conditions. 
\end{proposition}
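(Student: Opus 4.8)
The plan is to prove both directions of the equivalence, where the substantive content lies in showing that a Condorcet domain must satisfy a complete set of never conditions; the converse is comparatively routine.

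\textbf{The easy direction.} Suppose $\cd$ is non-empty and satisfies a complete set of never conditions $\cn$. Take any profile $P$ over $\cd$ with an odd number of voters, and suppose for contradiction that the majority tournament $\succ_P$ is not transitive. Since $\succ_P$ is a tournament on $A$, intransitivity forces a 3-cycle on some triple $\{a,b,c\}$, say $a\succ_P b\succ_P c\succ_P a$. Now I restrict attention to the triple $\{a,b,c\}$: the restrictions of the linear orders in $P$ to $\{a,b,c\}$ all lie in a domain on three elements that satisfies one of the conditions $xN_{\{a,b,c\}}i$, so they all lie in one of $CD_{3,t}, CD_{3,m}, CD_{3,b}$ (up to the labelling fixed by which element plays the role of $x$ and which position $i$). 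For each of these three 3-element domains one checks directly that the majority relation of any odd profile drawn from it is acyclic on $\{a,b,c\}$ --- this is the classical Sen-type triple argument, and it can be done by inspecting the at most $2^3$ sign patterns or simply by the known fact that each of the three maximal $3$-element Condorcet domains is a Condorcet domain. This contradicts the existence of the $3$-cycle.

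\textbf{The harder direction.} Conversely, suppose $\cd$ is a Condorcet domain; it is non-empty by definition. I must produce, for each triple $\{a,b,c\}\subseteq A$, some never condition $xN_{\{a,b,c\}}i$ satisfied by every order in $\cd$. Consider the set $\cd|_{\{a,b,c\}}$ of restrictions to $\{a,b,c\}$; this is a $3$-element Condorcet domain (restricting a profile and its majority relation commutes with restricting the domain, and transitivity on all of $A$ implies transitivity on the triple for the restricted profiles). So it suffices to show that every $3$-element Condorcet domain is contained in one of $CD_{3,t}$, $CD_{3,m}$, $CD_{3,b}$ --- equivalently, that it omits one of the six ``position patterns'' for a fixed element. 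The cleanest way: if $\cd|_{\{a,b,c\}}$ contained orders witnessing that \emph{every} element appears in top, middle and bottom position across the domain, one can select three such orders forming a ``Latin-square''-like configuration $abc$, $bca$, $cab$ (or its mirror), and then the $3$-voter profile consisting of these three orders has majority relation $a\succ b\succ c\succ a$ --- a Condorcet cycle, contradiction. Making this selection argument precise (showing that if no element is ``never-$i$'' for any $i$ then such a cyclic triple of orders must exist) is the main obstacle, and is handled by a short case analysis on $|\cd|_{\{a,b,c\}}|$: if the restriction has at most $4$ orders one lists them, and if it has $5$ or $6$ it already contains a forbidden pair like $\{abc, bca, cab\}$. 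Once this is established for every triple, the collection of the chosen conditions is by construction a complete set of never conditions satisfied by $\cd$, completing the proof.

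I expect the write-up to lean on the already-tabulated fact that $CD_{3,t}, CD_{3,m}, CD_{3,b}$ are exactly the maximal $3$-element Condorcet domains and are characterised by the three never conditions, so that both directions reduce to: (i) a $3$-element domain is Condorcet iff it satisfies a never condition on that triple, and (ii) a domain on $A$ is Condorcet iff each of its $3$-element restrictions is Condorcet. Step (ii) is immediate; step (i) is the genuine combinatorial core and is where any care is needed.
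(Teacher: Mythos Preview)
Your argument is correct, but you should know that the paper does not actually prove this proposition: its entire ``proof'' is a one-line appeal to the literature (Theorem~1(d) of \cite{puppe2019condorcet} and references therein). So you are not reconstructing the paper's reasoning; you are supplying a self-contained proof where the author chose to cite a well-known characterisation (essentially Sen's value-restriction theorem).

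On the substance of your write-up: both directions are sound, and your reduction to the two claims (i) and (ii) at the end is the right organisation. The one place you leave as a ``short case analysis'' --- that a $3$-element domain violating all nine never conditions must contain a cyclic triple --- can be done without cases. Each condition $xN_{\{a,b,c\}}i$ is witnessed by exactly two of the six orders, one from each of the two Latin-square triples $T_1=\{abc,bca,cab\}$ and $T_2=\{acb,cba,bac\}$; if the domain omits, say, $abc\in T_1$, then failing $aN1$, $bN2$, $cN3$ forces it to contain $acb$, $cba$, $bac$, i.e.\ all of $T_2$. So any domain failing every never condition contains a full cyclic triple, and the three-voter profile on that triple yields the desired Condorcet cycle. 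This tightens the only loose step in your sketch.
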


\begin{proof}
This is well-known characterisation noticed by many researchers. See, for example, Theorem~1(d)  in \cite{puppe2019condorcet} and references there.
\end{proof}

This proposition, in particular, means that the collection $\cd(\mathcal{N})$ of all linear orders that satisfy a certain complete set of never conditions  $\mathcal{N}$, if non-empty, is a Condorcet domain. Let us also denote by $\mathcal{N}(\cd)$ the set of all never conditions that are satisfied by all linear orders from a domain~$\cd$. 

Let $\psi\colon A\to A'$ be a bijection between two sets of alternatives. It can then be extended to a mapping $\psi\colon \cl(A)\to \cl(A')$ in two ways: by mapping a  linear order $u=a_1a_2\ldots a_m$ onto $\psi(u)=\psi(a_1)\psi(a_2)\ldots \psi(a_m)$\footnote{We use the same notation for both mappings since there can be no confusion.} or to $ \overline{\psi (u)}=\psi(a_m)\psi(a_{m-1})\ldots \psi(a_1).$

\begin{definition}
Let $A$ and $A'$ be two sets of alternatives (not necessarily distinct) of equal cardinality.
We say that two domains, $\cd 
\subseteq \mathcal{L}(A)$ and $\cd'
\subseteq \mathcal{L}(A')$ 
 are {\em isomorphic} if there is a bijection $\psi\colon A\to A'$ such that $\cd'=\{\psi(d) \mid d\in \cd\}$
and flip-isomorphic if $\cd'=\{\overline{\psi(d)} \mid d\in \cd\}$. 
\end{definition}

\begin{example}
\label{cd1_and_2}
The single-peaked and single-dipped maximal Condorcet domains on $\{a,b,c\}$ are $CD_{3,b}= \{abc, bac, bca, cba\}$   and $CD_{3,t}= \{abc, acb, cab, cba\}$, respectively.
They are not isomorphic but flip-isomorphic under the identity mapping of $\{a,b,c\}$ onto itself.
\end{example}

\begin{definition}[\cite{Puppe2018}]
\label{def:max_width}
A Condorcet domain $\cd$ is said to have {\em maximal width} if it contains two completely reversed orders, i.e., together with some linear order $u$ it also contains its flip $\bar{u}$.
\end{definition}

Up to an isomorphism, for any Condorcet domain $\cd$ of maximal width we may assume that $A=\{1,2,\ldots, n\}$ and it contains linear orders $e=12\ldots n$ and $\bar{e}=n\ldots 21$. \par\medskip

The universal domain $\cl (A)$ is naturally endowed with  the following betweenness structure (as defined by \cite{Kemeny1959}).  
An order $v$ is {\em between} orders $u$ and $w$ if
$v\supseteq u\cap w$, i.e.,~$v$ agrees with all binary comparisons in which
$u$ and $w$ agree (see also \cite{kem-sne:b:polsci:mathematical-models}). The set of all
orders that are between $u$ and $w$ is called the {\em interval}
spanned by $u$ and $w$ and is denoted by $[u,w]$. The domain
$\cl (A)$ endowed with this betweenness relation is referred to as the
{\em permutahedron} \citep{mon:survey}.\par\medskip

Given a domain of preferences $\cd$, for any $u,w\in \cd$ we define the induced interval as $[u,w]_\cd=[u,w]\cap \cd$.  \citet{puppe2019condorcet} defined a graph $G_\cd$ associated with this domain. The set of linear orders from $\cd$ are the set of vertices  $V_\cd$ of $G_\cd$, and for two orders $u,w\in \cd$ we draw an edge between them if there is no other vertex between them, i.e., $[u,w]_\cd=\{u,w\}$. The set of edges is denoted $E_\cd$ so the graph is $G_\cd=(V_\cd,E_\cd)$. As established in \cite{puppe2019condorcet}, for any Condorcet domain $\cd$ the graph $G_\cd$  is a median graph \citep{mulder} and any median graph can be obtained in this way. 

A domain $\cd$ is called {\em connected} if its graph $G_\cd$ is a subgraph of the permutahedron \citep{puppe2019condorcet}; we note that domains $CD_{3,t}$ and $CD_{3,b}$ are connected but $CD_{3,m}$ is not.  \citet{DKK:2012} called a domain of maximal width {\em semi-connected} if the two completely reversed orders can be connected by a path of vertices that is also a path in the permutahedron corresponding to a maximal chain in the Bruhat order. They proved that a maximal Condorcet domain of maximal width is semi-connected if and only if it is a peak-pit domain. \citet{Puppe2017} showed that for a maximal Condorcet domain semi-connectedness implies direct connectedness (Proposition~A2) which means that any two linear orders in the domain are connected by a shortest possible (geodesic) path.\par\medskip

Finally, we give two more definitions that express two properties of Condorcet domains. But, firstly, we will introduce the following notation. Suppose $\cd\subseteq \cl(A)$ be a domain on the set $A$ and let $B\subseteq A$. Suppose also $u\in \cd$. Then by $\cd_B$ and $u_B$ we denote the restrictions of $\cd$ and $u$ onto $B$, respectively. 

\begin{definition}
\label{def:copious}
We call a Condorcet domain $\cd$ {\em ample} if for any pair of alternatives $a,b\in A$ the restriction $\cd_{\{a,b\}}$ of this domain to $\{a,b\}$ has two distinct orders, that is, $\cd_{\{a,b\}}=\{ab,ba\}$.
\end{definition}

A rhombus tiling (or simply a tiling) is a subdivision $T$ into rhombic tiles of the zonogon $Z(n; 2)$ obtained as the Minkowski sum of $n$ segments $s_i=[0,\psi_i]$, $i=1,\ldots,n$. This centre-symmetric $2n$-gon has the bottom vertex $b=(0,0)$ and the top vertex $t=s_1+\ldots+s_n$. A snake is a path from $b$ to $t$ which, for each $i=1,\ldots,n$ contains a unique segments parallel to $s_i$. Each snake corresponds to a linear order on $\{1,\ldots,n\}$ in the following way. If a point traveling from $b$ to $t$ passes segments parallel to $s_{i_1},s_{i_2}\ldots, s_{i_n}$, then the corresponding linear order will be $i_1i_2\ldots i_n$. The set of snakes of a rhombus tiling, thus, defines a domain which is called {\em tiling domain}. \cite{DKK:2012} showed that peak-pit domains of maximal width are exactly the tiling domains (see an example on Figure~\ref{F4-passport}).

\begin{definition}[\cite{slinko2019}]
\label{def:copious}
A Condorcet domain $\cd$ is called {\em copious} if for any triple of alternatives $a,b,c\in A$ the restriction $\cd_{\{a,b,c\}}$ of this domain to this triple has four distinct orders, that is, $|\cd_{\{a,b,c\}}|=4$.
\end{definition}

Of course, any copious Condorcet domain is ample. We note that, if a domain $\cd $ is copious, then it satisfies a unique set of never conditions \eqref{complete_set}. 

\begin{definition}
A complete set of peak-pit conditions~\eqref{complete_set} is said to satisfy the {\em alternating scheme} \citep{PF:1996}, if
for all $1\le i<j<k\le n$ it includes
\[
\text{ $jN_{\{i,j,k\}}3$, if j is even, and $ jN_{\{i,j,k\}}1$, if $j$ is odd}
\]
or 
\[
\text{ $jN_{\{i,j,k\}}1$, if j is even, and $jN_{\{i,j,k\}}3$, if $j$ is odd}.
\]
\end{definition}

The domains that are determined by these complete sets we define $F_n$ and $\overline{F_n}$, respectively, and call Fishburn's domains \citep{DKK:2012}. The second domain is flip-isomorphic to the first so we consider only the first one. 

In particular, $F_2=\{12,21\}$, $F_3=\{123, 213, 231, 321\}$ and $$F_4=\{ 1234, 1243, 2134, 2143, 2413, 2431, 4213, 4231, 4321 \}.$$

\begin{figure}[t]
\begin{center}
\includegraphics[height=6.5cm]{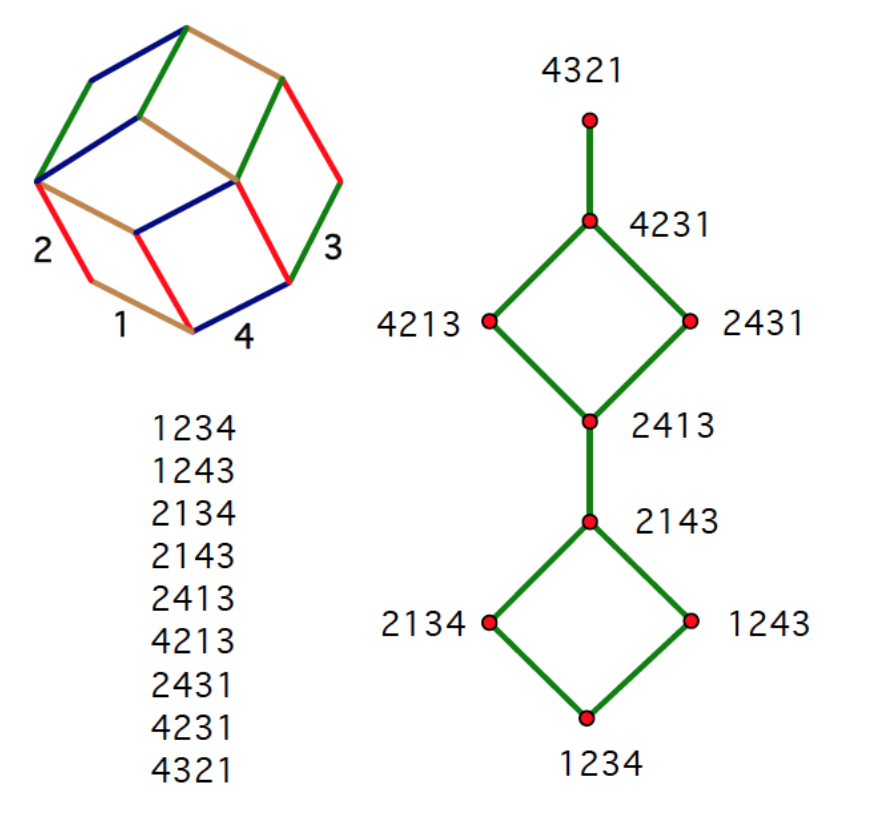}
\end{center}
\caption{Median graph and the tiling of the hexagon for Fishburn's domain $F_4$}
\label{F4-passport}
\end{figure}

Figure~\ref{F4-passport} shows the median graph of $F_4$ and its representation as a tiling domain. 

\cite{GR:2008} give the exact formula for the cardinality of $F_n$:
\begin{equation}
\label{card_alt_s}
|F_n|= (n+3)2^{n-3} - 
\begin{cases}
(n-\frac{3}{2}){n-2\choose \frac{n}{2}-1} & \text{for even $n$}\\
(\frac{n-1}{2}){n-1\choose \frac{n-1}{2}} & \text{for odd $n$}
\end{cases}
\end{equation}

Given a path in the permutahedron from $e=12\ldots n$ to $\bar{e}=nn{-}1\ldots 1$ where each pair $(i,j)$ with $1\le i<j\le n$ is switched exactly once (which can be associated with a maximal chain in the Bruhat order $\mathbb{B}(n,1)$) we say that it satisfies the {\em inversion triple} $[i,j,k]$ with $i<j<k$ if The pairs in this triple are switched in the order $(j,k),(i,k),(i,j)$. 
\cite{GR:2008} showed that if a maximal Condorcet domain $\cd$  of maximal width contains one maximal chain in the Bruhat order $\mathbb{B}(n,1)$ (i.e., is semi-connected), then it is a union of all equivalent maximal chains, i.e., those chains that satisfy the same set of inversion triples. Thus any maximal semi-connected Condorcet domain $\cd$ can be defined by the set of inversion triples. In particular, the domain $F_4$ can be defined by the set of inversion triples
\[
\{[1,3,4], [2,3,4]\}.
\]

\section{Main Results}

Let us start with an observation.

\begin{proposition}
\label{max_width_cop}
Let $\cd$ be a semi-connected Condorcet domain of maximal width on the set of alternatives $A$. Then: 
\begin{itemize}
\item[(i)] For any $a\in A$ its restriction $\cd'$ on $A'=A-\{a\}$ is also a semi-connected domain of maximal width.
\item[(ii)] $\cd$ is copious peak-pit domain.
\end{itemize}
\end{proposition}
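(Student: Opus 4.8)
The plan is to extract from the hypothesis the single combinatorial fact that drives everything: by maximal width together with the definition of semi-connectedness, $\cd$ contains a full maximal chain $C=(e=\pi_0,\pi_1,\dots,\pi_N)$, $N=\binom{n}{2}$, in which $\pi_{\ell+1}$ is obtained from $\pi_\ell$ by transposing two adjacent entries, $\pi_N=\bar e$, and each pair $\{i,j\}$ is inverted exactly once along the chain; all of $\pi_0,\dots,\pi_N$ lie in $\cd$.

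For (i) I would first record that $\cd'=\cd_{A'}$ is again a Condorcet domain: it is nonempty, and every never condition in $\cn(\cd)$ concerning a triple contained in $A'$ is inherited by $\cd_{A'}$, so $\cn(\cd_{A'})$ contains a never condition for every triple of $A'$ and Proposition~\ref{classic} applies. To obtain maximal width and semi-connectedness I restrict the chain $C$ to $A'$. A step that inverts a pair $\{i,j\}$ with $i,j\neq a$ still transposes the adjacent entries $i,j$ after restriction (restriction cannot place an element of $A'$ between two consecutive entries, so their adjacency survives), whereas a step inverting a pair containing $a$ leaves the restriction unchanged. Deleting the repeated terms therefore yields a path in the permutahedron on $A'$ from $e_{A'}$ to $\overline{e_{A'}}$ that inverts every pair of $A'$ exactly once, i.e.\ a maximal chain, whose vertices all lie in $\cd'$. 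Hence $\cd'$ is a semi-connected Condorcet domain of maximal width.

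For (ii), fix a triple $T\subseteq A$ and restrict $C$ to $T$ in the same way. Precisely three steps of $C$ are ``active'' on $T$, namely those inverting the three pairs inside $T$, so after deleting repetitions the restricted walk carries $e_T$ to its reverse using exactly three adjacent transpositions in $S_3$; since three adjacent transpositions must raise the inversion count from $0$ to $3$ by $+1$ at each step, the four orders are pairwise distinct and form one of the two maximal chains of $S_3$, which are exactly (up to the labelling induced by $e_T$) $CD_{3,b}$ and $CD_{3,t}$, satisfying respectively a never-bottom and a never-top condition on $T$. Thus $C_T\subseteq\cd_T$ with $|C_T|=4$, while $f(3)=4$ forces $|\cd_T|\le 4$, so $\cd_T=C_T$. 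This gives $|\cd_T|=4$ for every triple, so $\cd$ is copious, and $\cd_T\in\{CD_{3,t},CD_{3,b}\}$ for every triple, so $\cd$ is a peak-pit domain.

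The only point requiring genuine care is the bookkeeping for the restricted chains: one must check that the adjacency of a transposed pair is preserved under restriction, and that the restriction of a maximal chain never ``un-inverts'' a pair, so that it really is a maximal chain and not merely some walk between the two reversed orders; beyond that the argument is routine. (Part (ii) could alternatively be deduced from part (i) by induction on $|A|$, reducing to the case $|A|=3$, but the direct computation above makes the induction unnecessary.)
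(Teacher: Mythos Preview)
Your proof is correct and follows essentially the same approach as the paper: both restrict the maximal chain to the smaller alternative set, observing that swaps involving the removed element(s) collapse while the remaining swaps stay adjacent transpositions. For (ii) the paper iterates (i) down to a triple whereas you restrict the chain directly to the triple, but you yourself note this equivalence at the end, and the underlying argument is identical.
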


\begin{proof}
(i) If $w$ and $\bar{w}$ are two completely reversed linear orders in $\cd$, then after removal of $a$, their images will still be completely reversed. 
Let $u,v$ be two vertices in $G_\cd$ which are neighbouring vertices in the permutahedron on the path connecting $w$ and $\bar{w}$. Then $v$ differs from $u$ by a swap of neighbouring alternatives. Let $u',v'$ be their images under the natural mapping of $\cd$ onto $\cd'$. If one of these swapped alternatives was $a$, then $u'=v'$. If not, $u',v'$ will still differ by a swap of neighbouring alternatives. Hence $\cd'$ is semi-connected. \par\smallskip

(ii)  Let $a,b,c\in A$ and let $\cd''$ be the restriction of $\cd$ onto $\{a,b,c\}$. Since $\cd$ is of maximal width, the same can be said about $\cd''$ and without loss of generality we may assume that $\cd''$ contains $abc$ and $cba$. By (i) $\cd''$ is semi-connected and hence there will be two intermediate orders in $\cd''$ connecting $abc$ and $cba$. These would be either $acb$ and $cab$ or $bac$ and $bca$. Thus, $\cd''$ has four linear orders, and, hence, $\cd$ is copious domain satisfying $bN_{\{a,b,c\}}1$ or $bN_{\{a,b,c\}}3$, respectively. Hence it is a peak-pit domain. 
\end{proof}

\subsection{Danilov-Karzanov-Koshevoy construction and its generalisation}

\begin{figure}
\begin{center}
\includegraphics[height=4cm]{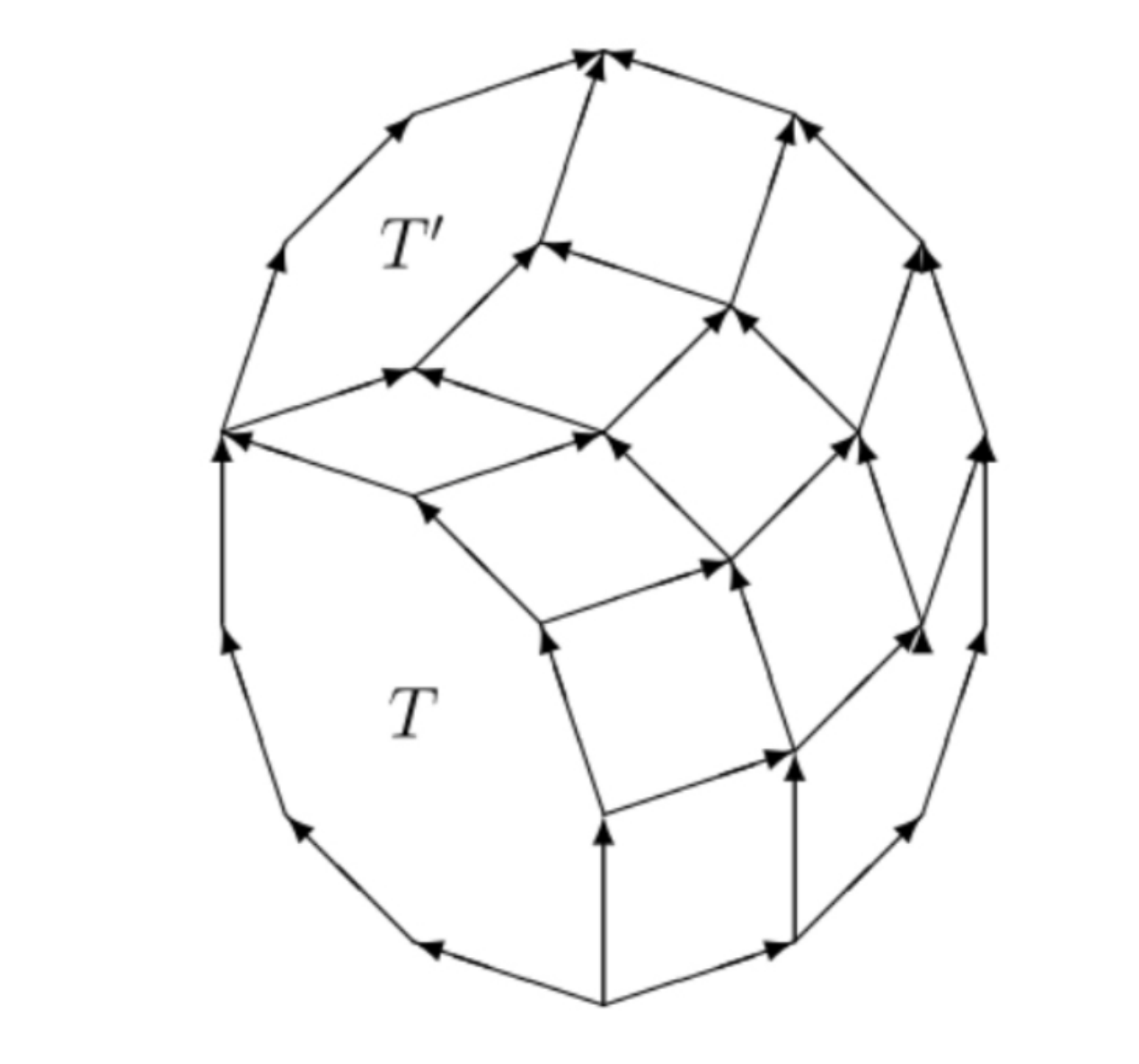}
\end{center}
\caption{Concatenation of tilings $T$ and $T'$}
\label{concat}
\end{figure}

Danilov-Karzanov-Koshevoy \citep{DKK:2012} define the `concatenation' of two tiling domains by the picture shown in Figure~\ref{concat} (where one arrow is obviously missing).

Let us now start describing this construction algebraically. In fact, this will be a generalisation of their construction since in our construction two arbitrary linear orders are involved. Firstly, we describe `pure' concatenation.

Let $\cd_1$ and $\cd_2$ be two Condorcet domains on disjoint sets of alternatives $A$ and $B$, respectively. We define a {\em concatenation} of these domains as the domain
\[
\cd_1\odot \cd_2 = \{xy \mid x\in \cd_1 \ \text{and $y\in \cd_2$}\}
\]
on $A\cup B$. It is immediately clear that $\cd_1\odot \cd_2$ is also a Condorcet domain of cardinality $|\cd_1\odot \cd_2|=|\cd_1||\cd_2|$. We have only to check that one of the never-conditions is satisfied for triples $\{a_1,a_2,b\}$ where $a_1,a_2\in A$ and $b\in B$ (for triples $\{a,b_1,b_2\}$ the argument will be similar).  The restriction $(\cd_1\odot \cd_2)|_{\{a_1,a_2,b\}}$ will contain at most two linear orders $a_1a_2b$ and $a_2a_1b$, which is consistent with both never-top and never-bottom conditions. This domain corresponds to $T$ and $T'$ on Figure~\ref{concat}. 

\begin{definition}
Let $A$ and $B$ be two disjoint sets of alternatives, $u\in \cl(A)$ and $v\in\cl(B)$. An order $w\in \cl (A\cup B)$ is said to be a {\em shuffle} of $u$ and $v$ if $w_A=u$ and $w_B=v$, i.e.,
the restriction of $w$ onto $A$ is equal to $u$ and the restriction of $w$ onto $B$ is equal to $v$. 
\end{definition}

For example,  $516723849$ is a shuffle of $1234$ and $56789$.\par\medskip
 
Given two linear orders $u$ and $v$, we define domain $u\oplus v$ as the set of all shuffles of $u$ and~$v$. It is clear from definition that $u\oplus v=v\oplus u$. The cardinality of this domain is $|u\oplus v|={n+m\choose m}$.  We believe this domain corresponds to what is depicted in Figure~\ref{concat} outside of $T$ and $T'$.

Now we combine the two domains together.

\begin{theorem}
Let $\cd_1$ and $\cd_2$ be two Condorcet domains on disjoint sets of alternatives $A$ and $B$. Let $u\in \cd_1$ and $v\in \cd_2$ be arbitrary linear orders. Then
\[
(\cd_1\otimes \cd_2)(u,v) := (\cd_1\odot \cd_2) \cup (u\oplus v)
\]
is a Condorcet domain. Moreover,  if $\cd_1$ and $\cd_2$ are peak-pit domains, so is $(\cd_1\otimes \cd_2)(u,v)$. 
\end{theorem}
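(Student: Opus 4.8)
The strategy is to verify that $(\cd_1\otimes\cd_2)(u,v)$ satisfies a complete set of never conditions, using Proposition~\ref{classic}, by checking each triple of alternatives separately. There are four kinds of triples. For a triple $\{a,b,c\}\subseteq A$, every order in $\cd_1\odot\cd_2$ restricts on $\{a,b,c\}$ to an order in $(\cd_1)_{\{a,b,c\}}$, and every shuffle in $u\oplus v$ restricts to $u_{\{a,b,c\}}\in(\cd_1)_{\{a,b,c\}}$ as well; hence the restriction of the whole domain to $\{a,b,c\}$ is contained in $(\cd_1)_{\{a,b,c\}}$, so it inherits whatever never condition $\cd_1$ satisfies on that triple. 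The symmetric argument handles triples inside $B$. This already disposes of two of the four cases and shows that no new violations are introduced "inside" either factor.

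The substantive cases are the mixed triples $\{a_1,a_2,b\}$ with $a_1,a_2\in A$, $b\in B$ (and symmetrically $\{a,b_1,b_2\}$). As already observed in the text for pure concatenation, the orders coming from $\cd_1\odot\cd_2$ restrict to at most the two orders $a_1a_2b$ and $a_2a_1b$ (or their reverses, depending on how $u$ and $v$ sit) — i.e.\ $b$ is always extreme, so both the never-top and never-bottom conditions hold for these. The shuffles in $u\oplus v$, however, can place $b$ in the middle: restricting a shuffle to $\{a_1,a_2,b\}$ gives $a_1a_2b$, $a_1ba_2$ or $ba_1a_2$ if $a_1\succ_u a_2$ (all three positions of $b$ relative to the fixed pair $a_1,a_2$). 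The key point is that the relative order of $a_1$ and $a_2$ is pinned down to be exactly $u_{\{a_1,a_2\}}$ in \emph{every} element of $u\oplus v$, and also in every element of $\cd_1\odot\cd_2$ — wait, not quite: in $\cd_1\odot\cd_2$ the pair $a_1,a_2$ can be ordered either way, since $\cd_1$ need not be trivial on that pair. So on the triple $\{a_1,a_2,b\}$ the full restriction is $\{a_1a_2b,\ a_2a_1b\}\cup\{a_1a_2b,\ a_1ba_2,\ ba_1a_2\}$ (assuming $a_1\succ_u a_2$), which is $\{a_1a_2b,\ a_2a_1b,\ a_1ba_2,\ ba_1a_2\}$. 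One checks directly that $a_2$ is never ranked last here (it is last only in $a_1a_2b$ — no: $a_1a_2b$ has $b$ last) — more carefully: $b$ is last in $a_1a_2b$ and $a_2a_1b$; $a_2$ is last in $a_1ba_2$; $a_1$ is never last; so the condition $a_1N3$ holds. Thus every mixed triple of this shape satisfies a never-bottom condition (with the "never last" alternative being the $u$-larger of $a_1,a_2$), and symmetrically every mixed triple $\{a,b_1,b_2\}$ satisfies a never-bottom condition (the "never last" alternative being the $v$-larger of $b_1,b_2$). Actually one must be attentive to the orientation: depending on whether $a$ precedes $b$ in the shuffles (it always does if $b\in B$ comes "after" $A$ — but shuffles interleave, so $a$ and $b$ need not be in any fixed order), one gets either a never-top or a never-bottom condition, but in all cases a \emph{peak-pit} condition, which is all we need for the second assertion. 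Collecting the four cases gives a complete set of never conditions, all of peak-pit type when $\cd_1,\cd_2$ are peak-pit, which proves the theorem via Proposition~\ref{classic} once we confirm the domain is nonempty (it contains $uv$).

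The main obstacle I anticipate is bookkeeping the orientations correctly: a shuffle of $u$ and $v$ does not put all of $A$ before all of $B$, so on a mixed triple $\{a_1,a_2,b\}$ one sees $b$ in front of, between, or behind the pair $a_1a_2$, and one must check that the union of these with the two concatenation orders $\{a_1a_2b, a_2a_1b\}$ still omits one alternative from one of positions $1$ or $3$. The clean way to see this is to note that in the union the relative order of $a_1$ and $a_2$ is \emph{not} fixed (both appear), so whichever never condition survives cannot be of the form "$b N i$"; it must single out $a_1$ or $a_2$, and since $b$ ranges over all three positions, the surviving condition forbids one of $a_1,a_2$ from being top (if $b$ is never strictly below both in the shuffles... ) — rather, the robust statement is: across all five listed orders, exactly one of $\{$some element is never first, some element is never last$\}$ holds, and identifying which reduces to the trivial $3$-element check above. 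I would present this as a short case analysis, possibly with a small table of the at-most-five orders appearing on each mixed triple, and remark that the never-middle condition is never the one forced, so peak-pit-ness is preserved.
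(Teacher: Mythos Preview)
Your approach is exactly the paper's: check the four types of triples separately and invoke Proposition~\ref{classic}. The pure-$A$ and pure-$B$ cases and the mixed case $\{a_1,a_2,b\}$ with $a_1,a_2\in A$, $b\in B$ are handled correctly and match the paper, yielding the never-bottom condition $a_1N_{\{a_1,a_2,b\}}3$ when $a_1\succ_u a_2$.

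The one place your bookkeeping slips is the symmetric mixed case $\{a,b_1,b_2\}$. You claim it too satisfies a never-bottom condition, with the $v$-larger of $b_1,b_2$ never last; but if $b_1\succ_v b_2$ the restriction is contained in $\{ab_1b_2,\ ab_2b_1,\ b_1ab_2,\ b_1b_2a\}$, and $b_1$ \emph{is} last in $ab_2b_1$. The condition that actually holds is never-top: $b_2N_{\{a,b_1,b_2\}}1$. The asymmetry between the two mixed cases is genuine and comes from $\cd_1\odot\cd_2$, which always puts $A$ above $B$; it has nothing to do with ``orientation of the shuffles'' as your hedge suggests. Your follow-up sentence (``either a never-top or a never-bottom condition, but in all cases a peak-pit condition'') rescues the conclusion, so the argument is ultimately sound, but the clean statement---which is what the paper gives---is simply: $\{a_1,a_2,b\}$ satisfies $a_1N3$ and $\{a,b_1,b_2\}$ satisfies $b_2N1$, where $a_1\succ_u a_2$ and $b_1\succ_v b_2$.
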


\begin{proof}
Let us fix $u$ and $v$ in this construction and denote $(\cd_1\otimes \cd_2)(u,v)$ as simply $\cd_1\otimes \cd_2$.
 If $a,b,c\in A$, then $(\cd_1\otimes \cd_2)_{\{a,b,c\}}=(\cd_1)_{\{a,b,c\}}$, i.e., the restriction of $\cd_1\otimes \cd_2$ onto $\{a,b,c\}$ is the same as the restriction of $\cd_1$ onto $\{a,b,c\}$. Hence $\cd_1\otimes \cd_2$ satisfies the same never condition for $\{a,b,c\}$ as $\cd_1$. For $x,y,z\in B$ the same thing happens. 

Suppose now $a,b\in A$ and $x\in B$. Then $(\cd_1\odot \cd_2)_{\{a,b,x\}}\subseteq \{abx, bax\}$. Let also $u_{\{a,b\}}=\{ab\}$. Then 
$(u\oplus v)_{\{a,b,x\}}=\{abx, axb, xab\}$, hence 
\begin{equation}
\label{abx}
(\cd_1\otimes \cd_2)_{\{a,b,x\}}\subseteq \{abx, bax, axb, xab\}, 
\end{equation}
thus $\cd_1\otimes \cd_2$ satisfies $aN_{\{a,b,x\}}3$. For $a\in A$ and $x,y\in B$ we have $(\cd_1\odot \cd_2)_{\{a,x,y\}}\subseteq \{axy, ayx\}$. Let also $v_{\{x,y\}}=\{xy\}$. Then 
$(u\oplus v)_{\{a,x,y\}}=\{axy, xay, xya\}$, hence 
\begin{equation}
\label{axy}
(\cd_1\otimes \cd_2)_{\{a,x,y\}}\subseteq \{axy, ayx, xay, xya\}, 
\end{equation}
 thus $\cd_1\otimes \cd_2$ satisfies $yN_{\{a,x,y\}}1$.
\end{proof}

{\bf Note:} The inequalities \eqref{abx} and \eqref{axy} become equalities if for any $i\in \{1,2\}$ and any $a,b\in \cd_i$ we have $(\cd_i)_{\{a,b\}} =\{ab,ba\}$, i.e., if $\cd_1$ and $\cd_2$ are ample.  

\begin{proposition}
\label{cardinality}
If $|A|=m$ and $|B|=n$, then for any $u\in\cd_1$ and $v\in \cd_2$
\begin{equation}
\label{cardinality_of_tensor_product}
|(\cd_1\otimes \cd_2)(u,v)|=|\cd_1||\cd_2|+{n+m\choose m}-1.
\end{equation}
\end{proposition}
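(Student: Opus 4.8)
The plan is to read off $|(\cd_1\otimes \cd_2)(u,v)|$ from the defining decomposition $(\cd_1\otimes \cd_2)(u,v)=(\cd_1\odot \cd_2)\cup(u\oplus v)$ by inclusion--exclusion:
\[
|(\cd_1\otimes \cd_2)(u,v)| = |\cd_1\odot \cd_2| + |u\oplus v| - |(\cd_1\odot \cd_2)\cap(u\oplus v)|.
\]
The first two summands are already available: $|\cd_1\odot \cd_2|=|\cd_1||\cd_2|$ was noted right after the definition of concatenation, and $|u\oplus v|=\binom{n+m}{m}$ was recorded just before the theorem. So the whole computation reduces to pinning down the overlap of the two pieces.

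The one step requiring an argument is the claim that $(\cd_1\odot \cd_2)\cap(u\oplus v)=\{uv\}$. First, $uv$ belongs to this intersection: it lies in $\cd_1\odot\cd_2$ since $u\in\cd_1$ and $v\in\cd_2$, and it lies in $u\oplus v$ since it is the (trivial) shuffle of $u$ and $v$ in which every element of $A$ precedes every element of $B$. Conversely, if $w$ lies in the intersection, then membership in $\cd_1\odot\cd_2$ forces $w=xy$ with $x\in\cd_1$, $y\in\cd_2$ --- that is, $w$ has the rigid block structure with all of $A$ ranked above all of $B$ --- so that $w_A=x$ and $w_B=y$; membership in $u\oplus v$ gives $w_A=u$ and $w_B=v$; hence $x=u$, $y=v$ and $w=uv$. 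Thus the intersection is a singleton and contributes $1$.

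Substituting into the inclusion--exclusion identity yields $|(\cd_1\otimes \cd_2)(u,v)|=|\cd_1||\cd_2|+\binom{n+m}{m}-1$, which is \eqref{cardinality_of_tensor_product}. There is no serious obstacle here; the only point to handle with care is that lying in the concatenation $\cd_1\odot\cd_2$ completely determines the relative order of $A$ versus $B$, which is exactly what prevents the overlap with $u\oplus v$ from being larger than a single order.
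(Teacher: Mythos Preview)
Your proof is correct and follows exactly the same approach as the paper: inclusion--exclusion on the defining union, with the observation that $(\cd_1\odot\cd_2)\cap(u\oplus v)=\{uv\}$. You supply a bit more detail in justifying that the intersection is a singleton, but the argument is otherwise identical.
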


\begin{proof}
We have $|\cd_1\otimes \cd_2|=|\cd_1||\cd_2|$ and $|u\oplus v|={n+m\choose m}$. These two sets have only one linear order in common which is $uv$. This proves \eqref{cardinality_of_tensor_product}.
\end{proof}

\begin{proposition}
\label{max_width}
Let $\cd_1$ and $\cd_2$ be of maximal width with $u,\bar{u}\in \cd_1$ and $v,\bar{v}\in \cd_2$.  Then $(\cd_1\otimes \cd_2)(u,v)$ is also of maximal width. If $\cd_1$ and $\cd_2$ are semi-connected, then so is $(\cd_1\otimes \cd_2)(u,v)$. 
\end{proposition}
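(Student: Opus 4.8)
The plan is to treat the two assertions separately. For maximal width I will point to an explicit pair of mutually reversed orders lying in $(\cd_1\otimes\cd_2)(u,v)$; for semi-connectedness I will join that pair by a maximal chain of the Bruhat order that stays inside the domain, obtained by concatenating a reversed maximal chain of $\cd_1$, a reversed maximal chain of $\cd_2$, and a ``shuffling'' segment.

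For the first assertion: since $\bar u\in\cd_1$ and $\bar v\in\cd_2$, the order $\bar u\bar v$ lies in $\cd_1\odot\cd_2$. Reversing the whole string gives $\overline{\bar u\bar v}=vu$, and $vu$ is a shuffle of $u$ and $v$ (its restriction to $A$ is $u$ and its restriction to $B$ is $v$), so $vu\in u\oplus v$. Hence $(\cd_1\otimes\cd_2)(u,v)$ contains the completely reversed pair $\bar u\bar v$ and $vu$, and so it has maximal width. Note that $uv$ will not do in place of $\bar u\bar v$, since $\overline{uv}=\bar v\bar u$ lies neither in $\cd_1\odot\cd_2$ nor, in general, in $u\oplus v$.

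For the second assertion: relabelling $A$ if necessary, we may take $u$ to be the base order of $A$, so that semi-connectedness of $\cd_1$ is witnessed by a maximal chain from $u$ to $\bar u$, i.e. a sequence $u=x_0,x_1,\dots,x_{\binom{m}{2}}=\bar u$ of orders of $\cd_1$ in which consecutive terms differ by an adjacent transposition and every pair of elements of $A$ is switched exactly once; likewise $\cd_2$ yields such a sequence $v=y_0,\dots,y_{\binom{n}{2}}=\bar v$ inside $\cd_2$. Now concatenate three paths inside $(\cd_1\otimes\cd_2)(u,v)$. Stage~(A) runs the reversed $\cd_1$-chain in the first block, $\bar u\bar v=x_{\binom{m}{2}}\bar v,\ x_{\binom{m}{2}-1}\bar v,\ \dots,\ x_0\bar v=u\bar v$, all lying in $\cd_1\odot\cd_2$. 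Stage~(B) runs the reversed $\cd_2$-chain in the second block, $u\bar v=u\,y_{\binom{n}{2}},\ \dots,\ u\,y_0=uv$, again all in $\cd_1\odot\cd_2$. Stage~(C), starting from $uv$, slides the elements of $v$ one at a time, in their $v$-order, leftwards past the block $u$, each move being an adjacent transposition of a $B$-element with an $A$-element, and ends at $vu$; every order occurring in stage~(C) keeps the $A$-elements in the order $u$ and the $B$-elements in the order $v$, hence is a shuffle of $u$ and $v$ and lies in $u\oplus v$. Each step of the concatenated path is an adjacent transposition of the whole string, so consecutive orders are neighbours in $G_{(\cd_1\otimes\cd_2)(u,v)}$; and stages~(A), (B), (C) switch respectively the $\binom{m}{2}$ pairs inside $A$, the $\binom{n}{2}$ pairs inside $B$, and the $mn$ mixed pairs, each exactly once, giving $\binom{m+n}{2}$ switches altogether. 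Thus the path corresponds to a maximal chain in the Bruhat order $\mathbb{B}(m+n,1)$ joining the completely reversed pair $\bar u\bar v$, $vu$, and it lies in $(\cd_1\otimes\cd_2)(u,v)$; hence this domain is semi-connected.

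The routine verifications are that an adjacent transposition carried out inside one block --- or, in stage~(C), between the two blocks --- is genuinely an adjacent transposition of the entire order, because each block is a contiguous segment, and that two orders differing in exactly one adjacent pair have nothing strictly between them in $\cl(A\cup B)$ and so are neighbours in the median graph. The design constraint that drives the whole construction, and the one place where care is needed, is keeping every intermediate order inside $(\cd_1\otimes\cd_2)(u,v)$: this is precisely why the path is split so that stages~(A) and (B) run within $\cd_1\odot\cd_2$ while stage~(C) runs within $u\oplus v$. Granting that, the pair count above is immediate and certifies that the concatenated walk corresponds to a maximal chain rather than merely a geodesic-looking walk, which is what semi-connectedness requires.
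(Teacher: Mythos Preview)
Your proof is correct and follows essentially the same route as the paper's: the same reversed pair $\bar u\bar v$ and $vu$ is exhibited for maximal width, and the same three-segment geodesic through $uv$ (first inside $\cd_1\odot\cd_2$, then inside $u\oplus v$) is used for semi-connectedness. The only difference is that the paper compresses your stages~(A) and~(B) into a single sentence and does not spell out the switch count $\binom{m}{2}+\binom{n}{2}+mn=\binom{m+n}{2}$, whereas you make both the staging and the count explicit.
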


\begin{proof}
Since $\cd_1$ and $\cd_2$ are of maximal width, we have $\bar{u}\in \cd_1$ and $\bar{v}\in\cd_2$. Hence $\bar{u}\bar{v}\in \cd_1\odot \cd_2$. We also have $vu \in u\oplus v$, and $\overline{vu}=\bar{u}\bar{v}$, hence $(\cd_1\otimes \cd_2)(u,v)$ has maximal width. To prove the last statement we note that $\bar{u}\bar{v}$ can be connected to $uv$ (which belongs both to $\cd_1\otimes \cd_2$ and to $u\oplus v$) by a geodesic path and $uv$ in turn can be connected to $vu$ by a geodesic path within $u\oplus v$.
\end{proof}

If both $\cd_1$ and $\cd_2$ have maximal width, it is not true, however, that $(\cd_1\otimes \cd_2)(u,v)$ will have maximal width for any $u\in\cd_1$ and $v\in \cd_2$. 
 Let us take, for example, $\cd_1=\{x=ab,\bar{x}=ba\}$ and $\cd_2=\{u=cde, v=dec, w=dce, \bar{u}=edc\}$. Then $(\cd_1\otimes \cd_2)(x,u)$ has maximal width while $(\cd_1\otimes \cd_2)(x,v)$ does not since $\bar{v}\notin \cd_2$. In particular,
\[
(\cd_1\otimes \cd_2)(x,u)\not\cong (\cd_1\otimes \cd_2)(x,v).
\]
This indicates that the construction of the tensor product may be useful in description of Condorcet domains which do not satisfy the requirement of maximal width.

\begin{proposition}
\label{cor:i-ii}
Let $\cd_1$ and $\cd_2$ be two Condorcet domains on disjoint sets of alternatives $A$ and $B$. Let $u\in \cd_1$ and $v\in \cd_2$ be arbitrary linear orders. Then
\begin{itemize}
\item[(i)] $(\cd_1\otimes \cd_2)(u,v)$ is connected, whenever $\cd_1$ and $\cd_2$ are;
\item[(ii)] $(\cd_1\otimes \cd_2)(u,v)$ is copious, whenever $\cd_1$ and $\cd_2$ are.
\end{itemize}
\end{proposition}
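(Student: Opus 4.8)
The plan is to verify each of the two properties by reducing to the behaviour of the constituent domains on triples, exactly as in the proof of the main theorem. Recall that a domain is connected if its median graph $G_\cd$ embeds in the permutahedron, and (by Proposition~\ref{classic} together with the discussion after it) this is a local condition that can be checked triple by triple: a Condorcet domain is connected iff its restriction to every triple is one of the two connected maximal $3$-element Condorcet domains $CD_{3,t}$ or $CD_{3,b}$, equivalently iff it never satisfies a never-middle condition on any triple while additionally being connected on each triple. Since the main theorem already shows $(\cd_1\otimes \cd_2)(u,v)$ is peak-pit whenever $\cd_1,\cd_2$ are, it satisfies a never-top or never-bottom condition on every triple, so the only thing to check for connectedness is that no triple induces a disconnected (even if peak-pit) subdomain; but by the analysis in the proof of the theorem, a triple $\{a,b,c\}\subseteq A$ induces exactly $(\cd_1)_{\{a,b,c\}}$, a triple inside $B$ induces $(\cd_2)_{\{a,b,c\}}$, and a mixed triple induces one of the sets in \eqref{abx} or \eqref{axy}. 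For the pure triples connectedness is inherited from $\cd_1$ and $\cd_2$; for the mixed triples one observes directly that the four-element sets appearing on the right of \eqref{abx} and \eqref{axy} (and their subsets arising when $\cd_i$ is not ample) are connected $3$-element Condorcet domains.

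For part (ii), copiousness, I would again argue triple by triple. By Definition~\ref{def:copious} (the second one), I must show $|((\cd_1\otimes\cd_2)(u,v))_{\{a,b,c\}}|=4$ for every triple. If $\{a,b,c\}\subseteq A$, then the restriction equals $(\cd_1)_{\{a,b,c\}}$, which has four elements since $\cd_1$ is copious; symmetrically for $\{a,b,c\}\subseteq B$. The genuinely new case is a mixed triple, say $a,b\in A$ and $x\in B$. Here the Note following the main theorem is the key: since $\cd_1$ is copious it is ample, so $(\cd_1)_{\{a,b\}}=\{ab,ba\}$, and hence the containments \eqref{abx} and \eqref{axy} are in fact equalities. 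Thus $((\cd_1\otimes\cd_2)(u,v))_{\{a,b,x\}}=\{abx,bax,axb,xab\}$ (if $u_{\{a,b\}}=ab$, with the obvious relabelling otherwise), which has exactly four elements; the case $a\in A$, $x,y\in B$ is identical using \eqref{axy} and amplitude of $\cd_2$.

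The main obstacle — really the only point requiring care — is to make precise the claim used for part (i): that connectedness of a Condorcet domain is detected by its triple-restrictions, and that the four-element mixed-triple domains in \eqref{abx} and \eqref{axy} are connected. For the latter one simply checks that, e.g., $\{abx,bax,axb,xab\}$ satisfies $aN_{\{a,b,x\}}3$ and is therefore isomorphic to $CD_{3,b}$, which was noted in the text to be connected; similarly the sets in \eqref{axy} are flip-isomorphic copies of $CD_{3,t}$, also noted to be connected. For the reduction itself I would invoke the characterisation of connected domains via the graph $G_\cd$ embedding in the permutahedron, noting that an edge of $G_\cd$ joining $w,w'$ corresponds to a single adjacent transposition precisely when the symmetric difference of $w$ and $w'$ as sets of ordered pairs is a single pair, which is again a condition visible on the relevant triple; alternatively, one can cite that a maximal Condorcet domain is connected iff it is peak-pit and semi-connected and appeal to Proposition~\ref{max_width}, handling the non-maximal case by passing to a maximal connected extension. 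I would present the triple-by-triple version as it is the most self-contained.
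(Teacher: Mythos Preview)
Your treatment of part~(ii) is correct and coincides with the paper's: copiousness is by definition a triple-by-triple condition, pure triples inherit it from $\cd_1$ or $\cd_2$, and for mixed triples the Note after the main theorem (copious $\Rightarrow$ ample, hence \eqref{abx} and \eqref{axy} are equalities) gives exactly four restricted orders.

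Your argument for part~(i), however, has a real gap. Connectedness of a Condorcet domain is \emph{not} a triple-local property. The domain $\{1234,\,2143\}$ on four alternatives has every triple restriction equal to a two-element set of adjacent orders (for instance $\{123,213\}$ on $\{1,2,3\}$), so each triple restriction is connected; yet $1234$ and $2143$ are neighbours in $G_\cd$ while differing by two disjoint transpositions, so $G_\cd$ is not a subgraph of the permutahedron and $\cd$ is not connected. Thus even after you verify that every triple restriction of $(\cd_1\otimes\cd_2)(u,v)$ is connected, the conclusion does not follow. Your fallback via Proposition~\ref{max_width} does not rescue the argument either: that proposition assumes $\bar u\in\cd_1$ and $\bar v\in\cd_2$, which part~(i) does not, and ``passing to a maximal connected extension'' presupposes exactly what has to be shown.

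The paper's proof of~(i) is global rather than triple-local. It uses the decomposition $\cd=(\cd_1\odot\cd_2)\cup(u\oplus v)$ and the fact that these two pieces intersect only in the single order $uv$. Each piece separately is connected (the first because $\cd_1$ and $\cd_2$ are, the second because the set of all shuffles of two fixed words is a grid in the permutahedron). If $w\in\cd_1\odot\cd_2$ and $w'\in u\oplus v$ are neighbours in $G_\cd$, one checks that $uv\in[w,w']$, hence $uv\in[w,w']_\cd=\{w,w'\}$, forcing $w=uv$ or $w'=uv$ and reducing to the case where both neighbours lie in the same piece. This one-point-gluing argument is what actually carries the proof; it cannot be replaced by the triple reduction you propose.
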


\begin{proof}
(i) If $\cd = (\cd_1\otimes \cd_2)(u,v)$ is connected, then $\cd_1$ and $\cd_2$ are connected too. Suppose now that $\cd_1$ and $\cd_2$ are connected, suppose $w,w'\in \cd$ which are neighbours in $\Gamma_\cd$. Since all neighbours in $\cd_1\otimes \cd_2$ are neighbours in the permutahedron and so are neighbours in $\cd_1\oplus \cd_2$, it is enough to consider the case when $w\in \cd_1\odot \cd_2$ and $w'\in \cd_1\oplus \cd_2$.  But $uv$ is on the shortest path from $w$ to $w'$ and it is in $\cd$. Hence either $w=uv$ or $w'=uv$ and either $\{w,w'\}\subseteq \cd_1\otimes \cd_2$ or $\{w,w'\}\subseteq \cd_1\oplus \cd_2$. This proves (i).

(ii) This part follows from \eqref{abx} and \eqref{axy} since, as was noted before, when $\cd_1$ and $\cd_2$ are copious these inequalities become equalities.
\end{proof}

\begin{proposition}
The following isomorphism holds
\begin{equation}
\label{product_of_CD2}
(F_2(a,b)\otimes F_2(c,d))(ab,cd) \cong F_4(b,a,d,c).
\end{equation}
\end{proposition}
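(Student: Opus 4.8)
The plan is to verify the isomorphism \eqref{product_of_CD2} by computing both sides explicitly as sets of linear orders on four alternatives and exhibiting a bijection of the underlying alternative sets that carries one onto the other (after a flip if necessary). Since $F_2 = \{12, 21\}$, we have $F_2(a,b) = \{ab, ba\}$ and $F_2(c,d) = \{cd, dc\}$, so by Proposition~\ref{cardinality} the left-hand side has cardinality $2\cdot 2 + \binom{4}{2} - 1 = 4 + 6 - 1 = 9$, which matches $|F_4| = 9$ from the formula \eqref{card_alt_s} (or directly from the displayed list of $F_4$). So the cardinalities already agree, and it suffices to produce the explicit lists and match them.

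First I would write out $\cd_1 \odot \cd_2 = \{ab\}\cdot\{cd,dc\} \cup \{ba\}\cdot\{cd,dc\} = \{abcd, abdc, bacd, badc\}$. Next I would enumerate $u \oplus v = ab \oplus cd$, the set of all shuffles of $ab$ and $cd$: these are the $\binom{4}{2} = 6$ orders $abcd, acbd, acdb, cabd, cadb, cdab$. Taking the union and removing the single common element $abcd$, the left-hand side of \eqref{product_of_CD2} is the $9$-element set
\[
\{abcd, abdc, bacd, badc, acbd, acdb, cabd, cadb, cdab\}.
\]
On the other side, $F_4(b,a,d,c)$ is obtained from the displayed list of $F_4$ by applying the bijection $1\mapsto b$, $2\mapsto a$, $3\mapsto d$, $4\mapsto c$ to each string (this is the isomorphism induced by relabelling, with no flip). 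I would carry out this substitution on all nine orders of $F_4$ and check, string by string, that the resulting set coincides with the set displayed above.

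The only real work — and the step most prone to a bookkeeping slip — is the careful case-by-case matching of the two nine-element lists under the relabelling $\psi\colon (1,2,3,4)\mapsto(b,a,d,c)$; everything else (the cardinality count, the shuffle enumeration, the concatenation) is immediate. One should double-check in particular whether the intended isomorphism is the straight relabelling $\psi$ or its flipped version $\overline{\psi(\cdot)}$, since Example~\ref{cd1_and_2} warns that flip-isomorphism rather than isomorphism can be what holds; here, because $1234$ maps to $badc \notin$ the LHS but $\overline{\psi(1234)} = cdab$ \emph{is} in the LHS, one sees the correct statement uses $\overline{\psi(\cdot)}$ — so I would present the bijection as $d \mapsto \overline{\psi(d)}$ and verify that $\{\overline{\psi(d)} : d \in F_4\}$ equals the LHS. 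Once that single verification is done the proposition follows directly from the definition of flip-isomorphism, noting that $F_4 \cong \overline{F_4}$ so the distinction between isomorphism and flip-isomorphism is harmless in the statement.
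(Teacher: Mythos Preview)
Your overall approach matches the paper's: compute the nine orders of the left-hand side explicitly and then verify the relabelling $1\mapsto b$, $2\mapsto a$, $3\mapsto d$, $4\mapsto c$ carries $F_4$ onto it. (The paper verifies this via the never conditions rather than by listing $\psi(F_4)$ element by element, but the content is the same.)

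However, you have a bookkeeping slip that sends you off on an unnecessary detour. You write that $\psi(1234)=badc\notin$ LHS, but $badc$ \emph{is} in your own displayed list of the left-hand side --- it is one of the four elements of the concatenation $\cd_1\odot\cd_2=\{abcd,abdc,bacd,badc\}$. In fact the straight relabelling $\psi$ (no flip) already works: one checks directly that $\{\psi(d):d\in F_4\}$ equals your nine-element set. So the whole discussion of flip-isomorphism and the appeal to $F_4\cong\overline{F_4}$ is unneeded and should be removed; the proposition as stated is an isomorphism, not merely a flip-isomorphism.
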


\begin{proof}
We list orders of this domain as columns of the following matrix
\[
[F_2(a,b)\odot F_2(c,d) \mid ab\oplus cd]=
\left[\begin{array}{cccc|ccccccccccc}
a&a&b&b&a&a&c&c&c   \\
b&b&a&a&c&c&a&a&d   \\
c&d&c&d&b&d&b&d&a   \\
d&c&d&c&d&b&d&b&b   
\end{array}
\right]. 
\]
We see that the following never conditions are satisfied: $aN_{\{a,b,c\}}3$, $aN_{\{a,b,d\}}3$, $dN_{\{a,c,d\}}1$, $dN_{\{b,c,d\}}1$. Hence the mapping $1\to b$, $2\to a$, $3\to d$ and $4\to c$ is an isomorphism of $F_4$ onto the tensor product $(F_2(a,b)\otimes F_2(c,d))(ab,cd)$.
\end{proof}

The isomorphism \eqref{product_of_CD2} is very nice but unfortunately for larger $m,n$ we have $F_m\otimes F_n\not\cong F_{m+n}$. Moreover, it appears that for two maximal Condorcet domains $\cd_1$ and $\cd_2$ on sets $A$ and $B$, respectively, $\cd_1\otimes \cd_2$ may not be maximal on $A\cup B$. Here is an example. 

\begin{example}
Let us calculate $\ce:=F_3(1,2,3)\otimes F_2(4,5)(321,54)$:
\[
\left[\begin{array}{cccccccc|cccccccccccccccccccccccccc}
1&2&2&3&1&2&2&3&3&3&5&3&3&3&5&5&5\\
2&1&3&2&2&1&3&2&2&5&3&2&5&5&3&3&4\\
3&3&1&1&3&3&1&1&5&2&2&5&2&4&2&4&3\\
4&4&4&4&5&5&5&5&1&1&1&4&4&2&4&2&2\\
5&5&5&5&4&4&4&4&4&4&4&1&1&1&1&1&1
\end{array}\right].
\]
There are 17 linear orders in this domain. It is  known, however, that $F_5$ has 20  \citep{PF:1996} but this fact alone does not mean non-maximality of $\ce$. By Proposition~\ref{cor:i-ii} this domain is copious. By its construction it satisfies just three inversion triples:
\[
[1,2,4],\quad [1,3,4],\quad [2,3,4].
\]
Now we see that there are two more linear orders $23514$ and $23541$ that satisfy these conditions. Hence $\ce$ is not maximal.
\end{example}

\subsection{On Fishburn's hypothesis}

We will further write $(F_k\otimes F_m)(u,v)$ simply as $F_k\otimes F_m$, when $u\in F_k$ and $v\in F_m$ are chosen so that $(F_k\otimes F_m)(u,v)$ has maximal width.  We note that equation \eqref{product_of_CD2} is just a one of a kind since $F_2\otimes F_3\not\cong F_5$ already. 

Our calculations, using formulas~\eqref{card_alt_s} and~\eqref{cardinality_of_tensor_product} show that
\[
|F_n\otimes F_n|<|F_{2n}|
\]
for $2<n \le 19$ but $4611858343415=|F_{20}\otimes F_{20}|>|F_{40}|=4549082342996$. 
Earlier, \cite{DKK:2012} showed that $|F_{21}\otimes  F_{21}|>  |F_{42}|$ disproving an old Fishburn's hypothesis that $F_n$ is the largest peak-pit Condorcet domain on $n$ alternatives \citep{PF:1996,GR:2008}.

\section{Conclusion and further research}

Operations over Condorcet domains are useful in many respects. The Danilov-Karzanov-Koshevoy construction is especially useful since it converts smaller peak-pit Condorcet domains into larger peak-pit domains. Fishburn's replacement scheme \citep{PF:1996} also produces larger Condorcet domains from smaller ones but without preserving peak-pittedness. Using it Fishburn proved that $f(16)>|F_{16}|$ and since he believed that $g(n)=|F_n|$ this would imply that $f(n)>g(n)$ for large $n$.  Now that we know that $g(n)>|F_n|$, the question whether or not $f(n)=g(n)$ comes to the fore. Another interesting question is to find the smallest positive integer $n$ for which $g(n)>|F_n|$.


\bibliography{cps}


\end{document}